\newcommand{\E}{\mathbf{E}}
\newcommand{\Var}{\mathbf{Var}}
\newcommand{\Cov}{\mathbf{Cov}}
\newcommand{\s}{\sigma}
\newtheorem{thm}{Theorem}[section]
\newtheorem{lemma}[thm]{Lemma}
\newtheorem{remark}[thm]{Remark}
\theoremstyle{definition}
\newtheorem{definition}[thm]{Definition}
\begin{document}
\title{On the Number of Reflexive and Shared Nearest Neighbor Pairs in One-Dimensional Uniform Data}
\author{
Selim Bahad{\i}r \& Elvan Ceyhan\\
%\thanks{
Department of Mathematics, Ko\c{c} University,\\
Sar{\i}yer, 34450, Istanbul, Turkey.%}
%\and
%Carey E. Priebe\thanks{Department of Applied Mathematics and Statistics,
%The Johns Hopkins University,
%Baltimore, Md. 21218}
}
\date{\today}
\maketitle

%\pagenumbering{roman} \setcounter{page}{1}
\pagenumbering{arabic} \setcounter{page}{1}

\begin{abstract}
\noindent
For a random sample of points in $\mathbb{R}$,
we consider the number of pairs whose members are nearest neighbors (NN) to each other and
the number of pairs sharing a common NN.
The first type of pairs are called reflexive NNs whereas latter type of pairs are called shared NNs.
In this article,
we consider the case where the random sample of size $n$ is from the uniform distribution on an interval.
We denote the number of reflexive NN pairs and the number of shared NN pairs in the sample as $R_n$ and $Q_n$, respectively.
We derive the exact forms of the expected value and the variance for both $R_n$ and $Q_n$,
and derive a recurrence relation for $R_n$
which may also be used to compute the exact probability mass function of $R_n$.
Our approach is a novel method for finding the pmf of $R_n$ and agrees with the results in literature.
We also present SLLN and CLT results for both $R_n$ and $Q_n$ as $n$ goes to infinity.
\end{abstract}

\noindent
{\it Keywords:} asymptotic normality; central limit theorem; exact distribution; law of large numbers;
nearest neighbor graphs and digraphs; random permutation

\section{Introduction}
\label{sec:intro}
The nearest neighbor (NN) relations and their properties have been extensively studied in various fields,
such as probability and statistics (\cite{bickel:1983}), computer science (\cite{yao:1997},
and ecology (\cite{clark:1954}).
Based on the NN relations,
NN graphs and digraphs are constructed and related graph quantities/invariants are widely studied
(\cite{yao:1997}, \cite{penrose:2001}, and \cite{kozakova:2006}).
We consider NN digraphs and quantities based on their arcs (i.e., directed edges).
In a NN digraph, $D=(V,A)$, the vertices are data points in $\mathbb{R}^d$,
and there is an arc from vertex $u$ to vertex $v$ (i.e., $(u,v) \in A$) if $v$ is a NN of $u$.
We call a pair of vertices $u,v$ as a \emph{reflexive NN pair},
if $v$ is a NN of $u$ and vice versa (i.e.,$ \{ (u,v),(v,u)\} \subset A$) (\cite{clark:1955} and \cite{cox:1981}).
If  both $(u,w)$ and $(v,w)$ are in $A$ for some $w\in V$ (i.e., $u$ and $v$ share a NN), then $(u,w)\& (v,w)$ are called \emph{shared NNs}.
Notice that altough $w$ is the shared vertex, arcs $(u,w)$ and $(v,w)$ are called shared NNs in literature (see, \cite{dixon:1994}).
The vertices in a reflexive NN pair are also called isolated NNs (\cite{pickard:1982}),
mutual NNs (\cite{schiling:1986}) or biroot (\cite{yao:1997}).

The NN digraph is also referred as the NN graph in literature (e.g., \cite{yao:1997});
but, since the NN relation is not symmetric,
we opt to use ``NN digraph" which reflects this asymmetry.
Also, the underlying graph of a NN digraph
(an underlying graph of a digraph is obtained by replacing each arc with an (undirected) edge,
disallowing multiple edges between two vertices (\cite{chartrand:1996})) is sometimes referred to as the NN graph
(see, e.g., \cite{friedman:1983}, \cite{penrose:2001}).
Since in any (undirected) graph,
the relation defining the edges is symmetric (i.e., each edge is symmetric),
reflexivity is not an interesting property for undirected graphs.

Number of reflexive and shared NN pairs in a NN digraph is of importance in various fields.
For example,
in spatial data analysis,
the distributions of the tests based on nearest neighbor contingency tables depend on these two quantities (\cite{dixon:1994} and \cite{ceyhan:cell2008}),
when the underlying pattern of the points is from a spatial distribution
(e.g., from homogeneous Poisson process (HPP) or a binomial process).
Moreover, neighbor sharing type quantities such as $Q_n$ are also of interest for the problem of estimating the intrinsic dimension of a data set (see, \cite{brito:2013}).

In our analysis,
we consider the special case of $d=1$ (i.e., one dimensional data),
and study the case when the random sample of size $n$ is obtained from uniform distribution over an interval.
We denote the total number of reflexive and shared NN pairs in the corresponding sample as $R_n$ and $Q_n$, respectively.
The quantity $R_n$ could be of interest for inferential purposes as well,
since it is a measure of mutual (symmetric) spatial dependence between points,
which might indicate a special and/or stronger form of clustering of data points.
For instance, a simple test based on the proportion of the number of
reflexive pairs to the sample size was presented by \cite{dacey:1960}
to interpret the degree of regularity or clustering of the locations of towns alongside a river.
However, the methodology of \cite{dacey:1960} ignores the randomness (and hence uncertainty)
in the value of $R_n$ and hence is not reliable.
The exact distribution of $R_n$ can be computed for finite values of $n$ and hence,
would make possible the use of $R_n$ in exact inference
for testing such one-dimensional clustering.

NN relations, such as reflexivity and neighbor sharing, are studied by many authors.
\cite{enns:1999} provide $\E(R_n)=n/3$ for $n\geq 3$, $\Var (R_n)=2n/45$ for $n\geq 5$ and a recurrence relation giving
the exact pmf of $R_n$ for finite $n$,
whereas the results in \cite{schiling:1986} yield
$\E(Q_n)/ n \rightarrow 1/4$ as $n \rightarrow \infty$.
For the number of reflexive pairs, we approach to the problem in the same way as \cite{enns:1999},
but we drive the mean, the variance and the recurrence relation by a different approach.
Further, we obtain mean and variance of $Q_n$ and compute the asymptotic distribution of both $R_n$ and $Q_n$,
which are novel contributions of this article to the literature.
We provide preliminary results in Section \ref{sec:prelim} where
we convert our problems into random permutations by using interchangeability of uniform spacings.
We derive means and variances of $R_n$ and $Q_n$ together with a recurrence relation giving
the exact pmf of $R_n$ in Section \ref{sec:mean-var}.
The asymptotic results (such as SLLN and CLT) for $R_n$ and $Q_n$ are presented in Section \ref{sec:asy-res},
and discussion and conclusions are provided in Section \ref{sec:disc-conc}.

\section{Preliminaries}
\label{sec:prelim}
The number of reflexive pairs and the number of shared neighbors in the data is invariant under translation and scaling,
since both depend only on the ordering of the pairwise distances of the data points.
Therefore, without loss of generality, we may only consider the uniform distribution over the interval $(0,1)$ (denoted $U(0,1)$).

A NN of a point is one of the ``closest'' points
with respect to some distance or dissimilarity measure.
We will employ the usual Euclidean distance in our analysis.
Observe that under uniform distribution,
the Lebesgue measure of the set of points which have more than one NN
is zero and therefore we may assume that each point has a unique NN with probability 1.
In a sample of size $n$ from $U(0,1)$,
recall that a pair of points is called \emph{reflexive}, if each one is the NN of the other,
and we denote the total number of reflexive pairs as $R_n$, and
a pair of points in the sample is called \emph{shared NN}, if they have the same NN (sharing the NN) and we denote the total number of shared NNs as $Q_n$.

Let $\{ U_1,U_2,\dots , U_n \}$ be a random sample of size $n$ from the uniform distribution $U(0,1)$.
On the real line,
there is a nice ordering structure for the data which we exploit in our results.
Let $ U_{(1)}, U_{(2)},\dots , U_{(n)}$ be the order statistics of $\{ U_1,U_2,\dots , U_n \}$.
Denote the spacings between the order statistics as $D_i:=U_{(i+1)}-U_{(i)}$ for $1\leq i \leq n-1$ with $D_0:=U_{(1)}$.

\begin{lemma}\label{lem:obsrn}
For $n\geq 3$
$$R_n={\bf 1}_{ \{D_1<D_2 \} }+\sum_{i=2}^{n-2} {\bf 1}_{ \{ D_{i}<\min\{ D_{i+1}, D_{i-1}\} \} }+{\bf 1}_{ \{ D_{n-1}<D_{n-2} \} },$$
and for $n\geq 4$
$$Q_n={\bf 1}_{ \{D_2<D_3 \} }+\sum_{i=2}^{n-3} {\bf 1}_{ \{ D_{i}< D_{i-1}, D_{i+1}<D_{i+2} \} }+{\bf 1}_{ \{ D_{n-2}<D_{n-3} \} },$$
where ${\bf 1}_{A}$ is the indicator for the event $A$.
\end{lemma}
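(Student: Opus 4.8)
The plan is to reduce both identities to an elementary case analysis of which order statistic is the nearest neighbor (NN) of each point, exploiting the one‑dimensional ordering. First I would record the basic geometric fact that on the line the NN of a point is always one of its two immediate neighbors among the order statistics: if $j<i-1$ then $U_{(j)}<U_{(i-1)}<U_{(i)}$, so $U_{(i-1)}$ is strictly closer to $U_{(i)}$ than $U_{(j)}$ is, and symmetrically for $j>i+1$. Hence, for an interior index $2\le i\le n-1$, the NN of $U_{(i)}$ is $U_{(i-1)}$ when $D_{i-1}<D_i$ and $U_{(i+1)}$ when $D_i<D_{i-1}$ (the tie event $D_{i-1}=D_i$ has probability zero and is ignored, as already justified in the paper), while the NN of the extreme points $U_{(1)}$ and $U_{(n)}$ is $U_{(2)}$ and $U_{(n-1)}$, respectively.

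For $R_n$: a reflexive pair consists of two points that are NNs of each other, and since every NN is an adjacent order statistic, a reflexive pair must have the form $\{U_{(i)},U_{(i+1)}\}$ for some $1\le i\le n-1$. I would then split into three cases. For $i=1$, $U_{(1)}$ always has $U_{(2)}$ as its NN, so $\{U_{(1)},U_{(2)}\}$ is reflexive iff the NN of $U_{(2)}$ is $U_{(1)}$, i.e. $D_1<D_2$. For $2\le i\le n-2$, both endpoints are interior, and $\{U_{(i)},U_{(i+1)}\}$ is reflexive iff the NN of $U_{(i)}$ is $U_{(i+1)}$ ($D_i<D_{i-1}$) and the NN of $U_{(i+1)}$ is $U_{(i)}$ ($D_i<D_{i+1}$), i.e. $D_i<\min\{D_{i-1},D_{i+1}\}$. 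For $i=n-1$, symmetrically the condition is $D_{n-1}<D_{n-2}$. Since these events over distinct $i$ describe disjoint pairs, summing their indicators gives exactly the stated formula, with the two singleton terms coming from $i=1$ and $i=n-1$ and the middle sum from $2\le i\le n-2$.

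The argument for $Q_n$ is parallel. A shared‑NN pair is a pair $\{U_{(i)},U_{(j)}\}$ with $i<j$ admitting a common point $w$ that is the NN of both; because $w$ can only be the NN of its two immediate order‑statistic neighbors, necessarily $w=U_{(i+1)}$ and $j=i+2$, and in particular no point serves as the shared neighbor of more than one pair, so there is no over‑counting. Admissible indices are $1\le i\le n-2$, and I would again split into $i=1$, $2\le i\le n-3$, and $i=n-2$. For $i=1$, $U_{(2)}$ is automatically the NN of $U_{(1)}$, so $\{U_{(1)},U_{(3)}\}$ shares a NN iff $U_{(2)}$ is the NN of $U_{(3)}$, i.e. $D_2<D_3$. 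For $2\le i\le n-3$, both $U_{(i)}$ and $U_{(i+2)}$ are interior, and they share the NN $U_{(i+1)}$ iff $D_i<D_{i-1}$ and $D_{i+1}<D_{i+2}$. For $i=n-2$, symmetrically the condition is $D_{n-2}<D_{n-3}$. Summing the indicators over $i$ yields the stated expression for $Q_n$.

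I do not anticipate a serious obstacle; the only points requiring care are (i) establishing that NNs are always adjacent order statistics, which forces reflexive and shared pairs into the forms $\{U_{(i)},U_{(i+1)}\}$ and $\{U_{(i)},U_{(i+2)}\}$ and rules out over‑counting of shared pairs; (ii) treating the boundary points $U_{(1)}$ and $U_{(n)}$ separately from interior points; and (iii) checking the index ranges ($n\ge 3$ for $R_n$, $n\ge 4$ for $Q_n$) so that the empty sums and singleton terms are read correctly, e.g. for $n=3$ the middle sum in $R_n$ is empty and ${\bf 1}_{\{D_1<D_2\}}+{\bf 1}_{\{D_2<D_1\}}=1$ almost surely, matching the obvious fact that three points on a line contain exactly one reflexive pair.
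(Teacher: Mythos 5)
Your proposal is correct and follows essentially the same argument as the paper: it uses the fact that on the line each point's NN is an adjacent order statistic, handles the boundary points $U_{(1)}$ and $U_{(n)}$ separately, and characterizes reflexive pairs as $\{U_{(i)},U_{(i+1)}\}$ with $D_i<\min\{D_{i-1},D_{i+1}\}$ and shared-NN pairs as $\{U_{(i)},U_{(i+2)}\}$ with common NN $U_{(i+1)}$. Your treatment is somewhat more detailed (e.g., the explicit no-over-counting remark and the $n=3$ sanity check), but the underlying decomposition is identical to the paper's proof.
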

\begin{proof}
First observe that the NNs of $U_{(1)}$ and $U_{(n)}$ are always $U_{(2)}$ and $U_{(n-1)}$, respectively.
Therefore,
$\{ U_{(1)}, U_{(2)} \}$ is a reflexive pair if and only if $D_1<D_2$ and, similarly, $\{ U_{(n-1)}, U_{(n)} \}$ is reflexive if and only if $D_{n-1}<D_{n-2}$.
Also note that, for each $2\leq i \leq n-1$, the NN of $U_{(i)}$ is either $U_{(i-1)}$ (if $D_{i-1}<D_i$) or $U_{(i+1)}$ (if $D_i<D_{i-1}$).
Thus, for $2\leq i,j\leq n-1$,
the pair $\{U_{(i)}, U_{(j)}\}$ with $i<j$ is reflexive if and only if $j=i+1$ and $D_{i}$ is less than both $D_{i-1}$ and $D_{i+1}$.
So, we obtain the first identity in the Lemma \ref{lem:obsrn}.
For the representation of $Q_n$, in a similar manner,
one can easily see that $U_{(i)}$ and $U_{(j)}$ ($i<j$) have the same NN only if $j=i+2$ and the common NN is $U_{(i+1)}$,
and obtain the desired result.
\end{proof}

As the quantities $R_n$ and $Q_n$ depend on the ordering of the spacings,
we focus on the distribution of the spacings.
By elementary probability arguments (e.g., Jacobian density theorem) it follows that the joint density of the spacings $(D_0,D_1,\dots , D_{n-1})$ is
\begin{align}\label{eq:unispdens}
f_S(d_0,d_1,\dots, d_{n-1})=n! {\bf 1}_{ \{d_0+d_1+\cdots +d_{n-1}<1\}}  {\bf 1}_{ \{ \min \{d_0,d_1, \dots ,d_{n-1} \}>0\} }
\end{align}
with the understanding that $\{d_0,d_1, \dots, d_{n-1}>0\}=\{d_0>0, d_1>0, \dots, d_{n-1}>0\}$.
By \eqref{eq:unispdens} it is clear that the spacings $D_1,\dots ,D_{n-1}$ are interchangeable and hence
$P(D_{\sigma (1)}< \cdots < D_{\sigma (n-1)})=P(D_1<\cdots <D_{n-1})$ for any permutation $\sigma$ in $P_{n-1}$,
where $P_{n-1}$ is the permutation group on $\{1,2,\dots , n-1\}$.
In other words, every ordering of the spacings $D_1,\dots ,D_{n-1}$ is equally likely to occur.

Let $\sigma$ be chosen uniformly at random from $P_{n-1}$.
Define the events $A_1=\{\sigma(1)<\sigma(2)\}$, $A_{n-1}=\{\sigma(n-1)<\sigma(n-2)\}$,
$A_i=\{\sigma(i)<\sigma(i-1), \sigma(i)< \sigma(i+1) \}$ for all $2\leq i \leq n-2$,
and the events $B_1={\{\s(2)<\s(3)\}}$, $B_{n-2}={\{\s(n-2)<\s(n-3)\}}$ and
$B_i={\{\s(i)<\s(i-1), \s(i+1)<\s(i+2)\}}$ for each $2\leq i \leq n-3$.
Then, by Lemma \ref{lem:obsrn} and the interchangeability of the spacings we have
\begin{align}
R_n\stackrel{d}{=}\sum_{i=1}^{n-1} {\bf 1}_{A_i}  \text{ and } Q_n\stackrel{d}{=}\sum_{i=1}^{n-2} {\bf 1}_{B_i}, \label{eq:equiv}
\end{align}
where $\stackrel{d}{=}$ denotes equality in distribution.
Therefore, throughout of this paper, we consider $\sum_{i=1}^{n-1} {\bf 1}_{A_i}$ and $\sum_{i=1}^{n-2} {\bf 1}_{B_i}$
for the probabilistic results for $R_n$ and $Q_n$, respectively.

\section{Some Probabilistic Results for $R_n$ and $Q_n$}
\label{sec:mean-var}
In this section, we derive the means and variances of $R_n$ and $Q_n$, and present a recurrence relation for the exact distribution of $R_n$.
\subsection{Mean and Variance of $R_n$}
In a digraph $D$, a \emph{weakly connected component} is a maximal subdigraph of $D$ in which there is a path from every vertex to every other vertex in the underlying graph of $D$.
\cite{enns:1999} call a weakly connected component of a digraph as \emph{society} and examine the number of societies in a uniform data of size $n$ in one dimension.
By the simple observation that each society contains exactly one reflexive pair,
they convert the problem into the number of reflexive pairs and focus on the ranking of the spacings.
Considering the spacing with the largest length, they derive a recurrence relation and obtain
$\E(R_n)=n/3$ for $n\geq 3$ and $\Var (R_n)=2n/45$ for $n\geq 5$ by using generating functions.
We verify their results by following the idea in \cite{romik:2011}.

We obtain the mean and variance of $R_n$ by computing those of $\sum_{i=1}^{n-1} {\bf 1}_{A_i}$.
The random variable $\sum_{i=1}^{n-1} {\bf 1}_{A_i}$ is closely related to the length of the longest alternating subsequence in a random permutation
(see, e.g., \cite{romik:2011}, \cite{houdre:2010}).
For a sequence of pairwise distinct real numbers $x_1,\dots ,x_n$, a subsequence $x_{i_1}, \dots , x_{i_k}$ with
$1\leq i_1< \cdots <i_k \leq n$ is called \emph{alternating} if it satisfies
\begin{align*}
x_{i_1}>x_{i_2} < x_{i_3} > \cdots x_{i_k}.
\end{align*}
Note that there may be more than one alternating subsequence with the maximal length.
For instance, the sequence $6,4,1,3,5,2$ has seven longest alternating subsequences, particularly
$(6,1,3,2)$, $(6,1,5,2)$, $(6,4,5,2)$, $(6,3,5,2)$, $(4,1,3,2)$, $(4,1,5,2)$ and $(4,3,5,2)$.
Let the random variable $L_n^{as}$ be the maximal length of an alternating subsequence of $\tau(1),\dots , \tau(n)$, where $\tau$ is a uniformly random permutation from $P_n$.

Also, for $2\leq k \leq n-1$, $x_k$ is called \emph{local minimum} (resp. \emph{local maximum}) if $x_k<\min \{x_{k-1}, x_{k+1} \}$
(resp. $x_k>\max \{x_{k-1}, x_{k+1} \}$) (\cite{romik:2011}).
Note that the sum $\sum_{i=2}^{n-2} {\bf 1}_{A_i}$ is the number of local minima in $\sigma(1),\dots , \sigma(n-1)$,
where $\sigma$ is a uniformly random permutation from $P_{n-1}$.
\cite{romik:2011} shows that $L_n^{as}$ is equal to $1+\bf{1}_{ \{ \tau(1)>\tau(2)\}}$ plus the number of local minimums and local maximums in a random permutation, and provides $\E(L_n^{as})=2n/3+1/6$ and $\Var(L_n^{as})=8n/45-13/180$ (which are also computed in \cite{stanley:2008} and \cite{houdre:2010} in different ways).
Notice that the number of local minima and number of local maxima differ by at most one,
and hence the number of local minima is about half of $L_n^{as}$.
Therefore, we have $\E(R_n)/n\rightarrow 1/3$ and $\Var(R_n)/n \rightarrow 2/45$ as $n\rightarrow \infty$.
In fact, the limits $1/3$ and $2/45$ are actually attained for every $n\geq 5$.

\begin{thm}\label{thm:meanvarrn}
For a random sample of size $n$ from $U(0,1)$, the mean and the variance of the number of reflexive pairs, $R_n$, is $n/3$ (for $n\geq 3$) and $2n/45$ (for $n\geq 5$), respectively.
\end{thm}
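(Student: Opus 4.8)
The plan is to sidestep the generating-function machinery of \cite{enns:1999} and compute the first two moments of $R_n$ directly from the distributional identity $R_n\stackrel{d}{=}\sum_{i=1}^{n-1}\mathbf{1}_{A_i}$ in \eqref{eq:equiv}, with $\sigma$ uniform on $P_{n-1}$. For the expectation, linearity reduces the problem to evaluating the $P(A_i)$. The two boundary events $A_1$ and $A_{n-1}$ each only compare two distinct ranks, so $P(A_1)=P(A_{n-1})=\tfrac12$, whereas for $2\le i\le n-2$ the event $A_i$ demands that a prescribed one of three distinct ranks be the smallest, so $P(A_i)=\tfrac13$. Hence $\E(R_n)=2\cdot\tfrac12+(n-3)\cdot\tfrac13=n/3$ for every $n\ge3$; note that the two boundary contributions conspire to leave no additive constant. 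The cases $n=3,4$ can be checked directly to confirm the formula at the lower edge of its range.

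For the variance I would start from $\Var(R_n)=\sum_{i=1}^{n-1}\Var(\mathbf{1}_{A_i})+2\sum_{1\le i<j\le n-1}\Cov(\mathbf{1}_{A_i},\mathbf{1}_{A_j})$, so the diagonal sum equals $2\cdot\tfrac14+(n-3)\cdot\tfrac29$. The crucial structural point for the covariances is that each $A_i$ depends only on the relative order of the ranks in a block of at most three consecutive positions, and that in a uniformly random permutation the relative orders of disjoint position-blocks are independent; therefore $\Cov(\mathbf{1}_{A_i},\mathbf{1}_{A_j})=0$ whenever $|i-j|\ge3$. For $|i-j|=1$ the events are mutually exclusive (no two consecutive positions can both be local minima, and $A_1$ is incompatible with $A_2$, as is $A_{n-2}$ with $A_{n-1}$), so $\Cov=-P(A_i)P(A_j)$, which is $-\tfrac16$ for each of the two boundary-adjacent pairs $(1,2)$ and $(n-2,n-1)$ and $-\tfrac19$ for each of the other $n-4$ adjacent pairs.

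The one step that actually requires computation is $|i-j|=2$, where the two position-blocks overlap in a single coordinate. A short enumeration over the $4!$ orderings of the four ranks involved handles the two boundary pairs $(1,3)$ and $(n-3,n-1)$, giving $P(A_i\cap A_j)=\tfrac{5}{24}$ and $\Cov=\tfrac1{24}$; an enumeration over the $5!$ orderings of five ranks handles each of the $n-5$ interior distance-$2$ pairs, giving $P(A_i\cap A_j)=\tfrac{2}{15}$ and $\Cov=\tfrac1{45}$ (both counts simplify once one observes that the shared coordinate cannot receive the smallest of the ranks involved). Assembling all contributions,
\[ \Var(R_n)=\Big(\tfrac12+\tfrac{2(n-3)}{9}\Big)+2\Big(-\tfrac13-\tfrac{n-4}{9}+\tfrac1{12}+\tfrac{n-5}{45}\Big)=\frac{2n}{45}, \]
where every additive constant again cancels; all the pair counts used here are legitimate exactly when $n\ge5$, which is why the formula holds from $n=5$ on.

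The main difficulty is not conceptual but organizational: one has to be careful about which index pairs are boundary versus interior, and about the least $n$ for which each cardinality (such as the number $n-5$ of interior distance-$2$ pairs) is nonnegative and each probability is genuinely computed over the correct number of ranks. Inside the two distance-$2$ enumerations it is easy to omit one of the four defining inequalities — especially the one linking the shared coordinate to an inner coordinate — and thereby spoil the final constant; the exact cancellation of all additive terms down to $2n/45$ is a convenient sanity check that no such omission has slipped through.
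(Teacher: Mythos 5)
Your proposal is correct and follows essentially the same route as the paper: reduce to $\sum_{i=1}^{n-1}\mathbf{1}_{A_i}$ via the interchangeability of spacings, compute $P(A_i)$ by linearity for the mean, and obtain the variance from the $2$-dependent covariance structure, with the adjacent covariances vanishing by mutual exclusivity and the distance-$2$ covariances coming from the counts $5/24$ and $16/120=2/15$ of alternating patterns on four and five ranks. The bookkeeping of boundary versus interior pairs and the final assembly match the paper's computation.
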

\begin{proof}
By \eqref{eq:equiv} it suffices to derive the mean and the variance of $\sum_{i=1}^{n-1} {\bf 1}_{A_i}$.
We first compute the mean.
Clearly, $\E( {\bf 1}_{A_1})=P(A_1)=P(\s(2)<\s(1))=1/2$ and similarly by symmetry $\E( {\bf 1}_{A_{n-1}})=1/2$.
For $2\leq i \leq n-2$, we easily get $\E( {\bf 1}_{A_i})=P(A_i)=P(\s(i)<\min\{\s(i-1),\s(i+1)\})=1/3$.
Thus, for $n\geq 3$ we obtain
\[
\E(R_n)=\E \left(\sum_{i=1}^{n-1} {\bf 1}_{A_i} \right)= \sum_{i=1}^{n-1} \E({\bf 1}_{A_i}) =\frac{1}{2}+(n-3)\frac{1}{3} +\frac{1}{2}=\frac{n}{3}.
\]
For the variance of $R_n$, we derive the covariances of ${\bf 1}_{A_i}$'s given in the following matrix:
\[
\left( \Cov({\bf 1}_{A_i}, {\bf 1}_{A_j}) \right)_{i,j =1}^{n-1}=
\left( \begin{array}{cccccccccc}
\vspace{.2cm}
\frac{1}{4} & \frac{-1}{6} & \frac{1}{24} & 0 & 0 & 0 & 0 & 0 & \cdots & 0\\
\vspace{.2cm}
\frac{-1}{6} & \frac{2}{9} & \frac{-1}{9} & \frac{1}{45} & 0 & 0 & 0 & 0 & \cdots & 0\\
\vspace{.2cm}
\frac{1}{24} & \frac{-1}{9} & \frac{2}{9} & \frac{-1}{9} & \frac{1}{45} & 0 & 0 & 0 & \cdots & 0\\
\vspace{.2cm}
0 & \frac{1}{45} & \frac{-1}{9} & \frac{2}{9} & \frac{-1}{9} & \frac{1}{45} & 0 & 0 & \cdots & 0\\
\vspace{.2cm}
0 & 0 & \frac{1}{45} & \frac{-1}{9} & \frac{2}{9} & \frac{-1}{9} & \frac{1}{45}  & 0 & \cdots & 0\\
\vspace{.2cm}
\vdots & \vdots & \ddots & \ddots & \ddots  & \ddots & \ddots & \ddots & \ddots & \vdots \\
\vspace{.2cm}
0 & 0 & \cdots & 0  & \frac{1}{45} & \frac{-1}{9} & \frac{2}{9} & \frac{-1}{9} & \frac{1}{45}  & 0  \\
\vspace{.2cm}
0 & 0 & \cdots  & & 0  & \frac{1}{45} & \frac{-1}{9} & \frac{2}{9} & \frac{-1}{9} & \frac{1}{24}   \\
\vspace{.2cm}
0 & 0 & \cdots  &  & & 0  & \frac{1}{45} & \frac{-1}{9} & \frac{2}{9} & \frac{-1}{6}   \\
\vspace{.2cm}
0 & 0 & \cdots  & & & & 0  & \frac{1}{24} & \frac{-1}{6} & \frac{1}{4}   \\
\end{array}
\right).
\]
First notice that the events $A_i$ and $A_j$ are independent whenever $|i-j|>2$,
since each $A_i$ only depends on the ordering of $\s(i-1), \s(i)$ and $\s(i+1)$.
Thus, we get $\Cov({\bf 1}_{A_i}, {\bf 1}_{A_j})=0$ if $|i-j|>2$.
Remaining covariances on the diagonal strip $|i-j|\leq 2$ are computed as follows.
By symmetry assume $i\leq j$.
For $i=j$, one can easily have
\begin{align}
\Cov({\bf 1}_{A_i}, {\bf 1}_{A_i})=\Var({\bf 1}_{A_i})=P(A_i) (1-P(A_i))=\begin{cases} 1/4 & i=1  \text{ or\ } n-1,\\ 2/9 & 2\leq i \leq n-2. \end{cases} \label{eq:varAi}
\end{align}
Next, we compute the off diagonal terms on the strip $|i-j|\leq 2$.
For $i=1$, we have
\begin{align}
\Cov({\bf 1}_{A_1}, {\bf 1}_{A_2})&=P(A_1 \cap A_2)-\frac{1}{2} \cdot \frac{1}{3}
=P(\s(1)<\s(2), \s(2)<\s(1), \s(2)<\s(3))-\frac{1}{6} \nonumber\\
&= 0-\frac{1}{6}=\frac{-1}{6}, \label{eq:covA1A2}
\end{align}
since the event $\{ \s(1)<\s(2), \s(2)<\s(1), \s(2)<\s(3) \}$ can not occur
and
\begin{align}
\Cov({\bf 1}_{A_1}, {\bf 1}_{A_3})&=P(A_1 \cap A_3)-\frac{1}{2} \cdot \frac{1}{3}
=P(\s(1)<\s(2)> \s(3)<\s(4))-\frac{1}{6} \nonumber \\
&= \frac{5}{24}-\frac{1}{6}=\frac{1}{24}. \label{eq:covA1A3}
\end{align}
where $5/24$ comes from the fact that there are 5 alternating permutations of order 4.
By symmetry, we also have
\begin{align}
\Cov({\bf 1}_{A_{n-2}}, {\bf 1}_{A_{n-1}})=-1/6 \text{ and }
\Cov({\bf 1}_{A_{n-3}}, {\bf 1}_{A_{n-1}})=1/24. \label{eq:covAn-1}
\end{align}

When $j=i+1$, for each $2\leq i \leq n-3$ we have
\begin{align}
\Cov({\bf 1}_{A_i}, {\bf 1}_{A_{i+1}})&=\Cov({\bf 1}_{A_2}, {\bf 1}_{A_{3}}) =P(A_2 \cap A_{3})-\frac{1}{3} \cdot \frac{1}{3} \nonumber \\
&=P(\s(2)<\s(1), \s(2)<\s(3), \s(3)<\s(2), \s(3)<\s(4))-\frac{1}{9} = 0-\frac{1}{9}=\frac{-1}{9}, \label{eq:covA2A3}
\end{align}
since the event $\{ \s(2)<\s(1), \s(2)<\s(3), \s(3)<\s(2), \s(3)<\s(4) \}$ can not occur.
And, finally, when $j=i+2$, for each $2 \leq i \leq n-4$ we obtain
\begin{align}
\Cov({\bf 1}_{A_i}, {\bf 1}_{A_{i+2}})&=\Cov({\bf 1}_{A_2}, {\bf 1}_{A_{4}}) =P(A_2 \cap A_{4})-\frac{1}{3} \cdot \frac{1}{3} \nonumber \\
&=P(\s(1)> \s(2)<\s(3)> \s(4)<\s(5))-\frac{1}{9} \nonumber  \\
&= \frac{16}{120}-\frac{1}{9}=\frac{1}{45}. \label{eq:covA2A4}
\end{align}
where $16/120$ comes from the fact that there are 16 alternating permutations of order 5.
Therefore, for each $n\geq 5$, by combining the equations in \eqref{eq:varAi}-\eqref{eq:covA2A4} we obtain
\begin{align*}
\Var(R_n)&=\Var \left(\sum_{i=1}^{n-1} {\bf 1}_{A_i} \right)=\sum_{i,j=1}^{n-1} \Cov( {\bf 1}_{A_i}, {\bf 1}_{A_j}) \\
&=2 \cdot \frac{1}{4} +(n-3)\frac{2}{9} +4 \cdot \frac{-1}{6}+ 4 \cdot \frac{1}{24}+ 2(n-4)\frac{-1}{9} + 2(n-5)\frac{1}{45} \\
&= \frac{2n}{45}.
\end{align*}
\end{proof}

\subsection{Mean and Variance of $Q_n$}
The mean and variance of $Q_n$ can be derived in a similar manner.
\begin{thm}\label{thm:meanvarqn}
For a random sample of size $n$ from $U(0,1)$, the mean and the variance of the number of shared NNs, $Q_n$,
is $n/4$, for $n\geq 4$, and $19n/240$, for $n\geq 7$, respectively.
\end{thm}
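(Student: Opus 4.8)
The plan is to follow the proof of Theorem~\ref{thm:meanvarrn} almost verbatim. By \eqref{eq:equiv} it suffices to compute the mean and variance of $\sum_{i=1}^{n-2}\mathbf{1}_{B_i}$ for $\sigma$ chosen uniformly from $P_{n-1}$. Each $B_i$ depends only on the relative order of $\sigma$ restricted to a short window of consecutive positions. The two boundary events $B_1$ and $B_{n-2}$ each involve a single pair of positions, so $P(B_1)=P(B_{n-2})=1/2$. For $2\le i\le n-3$, $B_i=\{\sigma(i)<\sigma(i-1)\}\cap\{\sigma(i+1)<\sigma(i+2)\}$ is an intersection of two events governed by the \emph{disjoint} position pairs $\{i-1,i\}$ and $\{i+1,i+2\}$; since the relative orders of a uniform random permutation on disjoint sets of positions are independent, $P(B_i)=(1/2)(1/2)=1/4$. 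Hence $\E(Q_n)=1/2+(n-4)(1/4)+1/2=n/4$ for $n\ge 4$.

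For the variance I would assemble the covariance matrix $\left(\Cov(\mathbf{1}_{B_i},\mathbf{1}_{B_j})\right)_{i,j=1}^{n-2}$, which — just as in Theorem~\ref{thm:meanvarrn} — is banded. The same independence fact shows that $\mathbf{1}_{B_i}$ and $\mathbf{1}_{B_j}$ are independent whenever the position windows defining them are disjoint: for interior indices these windows are $\{i-1,i,i+1,i+2\}$, so the covariance vanishes once $|i-j|\ge 4$, and the window $\{2,3\}$ of $B_1$ kills the covariance of $\mathbf{1}_{B_1}$ with $\mathbf{1}_{B_j}$ for all $j\ge 5$ (and symmetrically near $B_{n-2}$). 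There remain only a bounded number of nonzero diagonals, which I would evaluate by the two moves already used for $R_n$: (i) if $B_i\cap B_j$ entails a contradictory chain of inequalities (typically one event demands $\sigma(k)<\sigma(k+1)$ while the other demands $\sigma(k+1)<\sigma(k)$, which is what happens at interior distance $2$ and for the pair $B_1,B_3$) then $P(B_i\cap B_j)=0$ and $\Cov(\mathbf{1}_{B_i},\mathbf{1}_{B_j})=-P(B_i)P(B_j)$; (ii) otherwise $B_i\cap B_j$ factors into independent events on disjoint blocks of positions, each of the form ``a designated position is the minimum or the maximum of its block'' or ``one position beats another,'' with probability $1/3$ or $1/2$, and $P(B_i\cap B_j)$ is the product. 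This yields $\Var(\mathbf{1}_{B_1})=\Var(\mathbf{1}_{B_{n-2}})=1/4$ and $\Var(\mathbf{1}_{B_i})=3/16$ for $2\le i\le n-3$, the boundary covariances $\Cov(\mathbf{1}_{B_1},\mathbf{1}_{B_2})=0$, $\Cov(\mathbf{1}_{B_1},\mathbf{1}_{B_3})=-1/8$, $\Cov(\mathbf{1}_{B_1},\mathbf{1}_{B_4})=1/24$ (together with their mirror images near $B_{n-2}$), and the interior covariances $\Cov(\mathbf{1}_{B_i},\mathbf{1}_{B_{i+1}})=-1/80$, $\Cov(\mathbf{1}_{B_i},\mathbf{1}_{B_{i+2}})=-1/16$, $\Cov(\mathbf{1}_{B_i},\mathbf{1}_{B_{i+3}})=1/48$.

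The last step is to sum the matrix: two boundary diagonal terms equal to $1/4$, $(n-4)$ interior diagonal terms equal to $3/16$, the four nonzero boundary off-diagonal terms (each counted twice), and, along the interior band, $(n-5)$ pairs at distance $1$, $(n-6)$ at distance $2$ and $(n-7)$ at distance $3$ (each counted twice). Separating the constant part from the coefficient of $n$, the constants cancel and the coefficient of $n$ comes out to $3/16-1/40-1/8+1/24=19/240$, giving $\Var(Q_n)=19n/240$. I expect the main obstacle to be not any single covariance but this bookkeeping together with the boundary case analysis: one must verify that for $n\ge 7$ the ``left'' cluster $B_1,\dots,B_4$ and the ``right'' cluster stand far enough apart that no entry is double-counted and the counts $n-5$, $n-6$, $n-7$ are nonnegative — which is exactly why the stated formula requires $n\ge 7$, analogously to the threshold $n\ge 5$ for $\Var(R_n)$. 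Verifying $n=7$ by hand (a $5\times 5$ covariance matrix, with $\Var(Q_7)=133/240=19\cdot 7/240$) is a useful check before committing to the general formula.
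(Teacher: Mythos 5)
Your proposal is correct and follows essentially the same route as the paper's proof: the same reduction via \eqref{eq:equiv}, the same banded covariance matrix with identical entries ($1/4$, $3/16$, $0$, $-1/8$, $1/24$, $-1/80$, $-1/16$, $1/48$), and the same summation giving $19n/240$ with the $n\geq 7$ threshold. The only (inconsequential) slip is the remark that the factored blocks have probability ``$1/3$ or $1/2$'': the minimum-of-a-block factors can also be $1/4$ or $1/5$ (e.g.\ $P(B_1\cap B_2)=\tfrac14\cdot\tfrac12$ and $P(B_2\cap B_3)=\tfrac15\cdot\tfrac12\cdot\tfrac12$), which your stated covariance values already reflect.
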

\begin{proof}
Again by \eqref{eq:equiv}, we compute the mean and the variance of $\sum_{i=1}^{n-2} {\bf 1}_{B_i}$.
Clearly,
\[
\E({\bf 1}_{B_i})=P(B_i)=\begin{cases} 1/2 & i=1 \ {\text or\ } n-2,\\ 1/4 & 2\leq i \leq n-3, \end{cases}
\]
and hence, for every $n\geq 4$, we get
\[
\E(Q_n)=\E \left(\sum_{i=1}^{n-2} {\bf 1}_{B_i}\right)= \sum_{i=1}^{n-2} \E({\bf 1}_{B_i})=\frac{1}{2}+(n-4)\frac{1}{4} +\frac{1}{2}=\frac{n}{4}.
\]
For the variance, we compute the covariances of ${\bf 1}_{B_i}$'s in the following matrix:
\[
\left( \Cov({\bf 1}_{B_i}, {\bf 1}_{B_j}) \right)_{i,j =1}^{n-2}=
\left( \begin{array}{ccccccccccccc}
\vspace{.2cm}
\frac{1}{4} & 0 & \frac{-1}{8} & \frac{1}{24} & 0 & 0 & 0 & 0 & 0 & 0 & 0 & \cdots & 0\\
\vspace{.2cm}
0 & \frac{3}{16} & \frac{-1}{80} & \frac{-1}{16}& \frac{1}{48} & 0 & 0 & 0 & 0 & 0 & 0 & \cdots & 0 \\
\vspace{.2cm}
\frac{-1}{8} & \frac{-1}{80} & \frac{3}{16} & \frac{-1}{80} & \frac{-1}{16} & \frac{1}{48} & 0 & 0 & 0 & 0 & 0 & \cdots & 0 \\
\vspace{.2cm}
\frac{1}{24} & \frac{-1}{16} & \frac{-1}{80} & \frac{3}{16} & \frac{-1}{80} & \frac{-1}{16} & \frac{1}{48} & 0 & 0 & 0 & 0 & \cdots & 0\\
\vspace{.2cm}
0 & \frac{1}{48} & \frac{-1}{16} & \frac{-1}{80} & \frac{3}{16} & \frac{-1}{80} & \frac{-1}{16} & \frac{1}{48} & 0 & 0 & 0 & \cdots & 0 \\
\vspace{.2cm}
0 & 0 & \frac{1}{48} & \frac{-1}{16} & \frac{-1}{80} & \frac{3}{16} & \frac{-1}{80} & \frac{-1}{16} & \frac{1}{48} & 0 & 0 & \cdots & 0 \\
0 & 0 & 0 & \frac{1}{48} & \frac{-1}{16} & \frac{-1}{80} & \frac{3}{16} & \frac{-1}{80} & \frac{-1}{16} & \frac{1}{48} & 0 & \cdots & 0 \\
\vspace{.2cm}
\vdots & \vdots & \ddots & \ddots  & \ddots & \ddots & \ddots & \ddots  & \ddots & \ddots & \ddots & \ddots & \vdots \\
\vspace{.2cm}
0 & 0 &  \cdots &  & 0 & \frac{1}{48} & \frac{-1}{16} & \frac{-1}{80} & \frac{3}{16} & \frac{-1}{80} & \frac{-1}{16} & \frac{1}{48} & 0 \\
\vspace{.2cm}
0 & 0 & \cdots &  &  & 0 & \frac{1}{48} & \frac{-1}{16} & \frac{-1}{80} & \frac{3}{16} & \frac{-1}{80} & \frac{-1}{16}  & \frac{1}{24} \\
\vspace{.2cm}
0 & 0 & \cdots &  &  &  & 0 & \frac{1}{48} & \frac{-1}{16} & \frac{-1}{80} & \frac{3}{16} & \frac{-1}{80} & \frac{-1}{8} \\
\vspace{.2cm}
0 & 0 & \cdots &  &  &  &  & 0 & \frac{1}{48} & \frac{-1}{16} & \frac{-1}{80} & \frac{3}{16} & 0 \\
\vspace{.2cm}
 0 & 0 & \cdots &  &  &  &  &  & 0 & \frac{1}{24} & \frac{-1}{8} & 0 & \frac{1}{4}
\end{array} \right).
\]
Note that the events $B_i$ and $B_j$ are independent whenever $|i-j|>3$,
since each $B_i$ only depends on the ordering of $\s(i-1), \s(i), \s(i+1)$ and $\s(i+2)$.
Therefore, we have $\Cov({\bf 1}_{B_i}, {\bf 1}_{B_j})=0$ if $|i-j|>3$.
Remaining covariances (i.e., the entries in the diagonal strip $|i-j|\leq 3$) are computed as follows.
By symmetry suppose $i\leq j$.
When $i=j$, one can easily obtain the main diagonal terms
\begin{align}
\Cov({\bf 1}_{B_i}, {\bf 1}_{B_i})=\Var({\bf 1}_{B_i})=P(B_i) (1-P(B_i))=\begin{cases} 1/4 & i=1 \ \text{ or\ } n-2,\\ 3/16 & 2\leq i \leq n-3. \end{cases} \label{eq:varBi}
\end{align}
We next compute the off diagonal terms on the strip.
For $i=1$, we have
\begin{align}
\Cov({\bf 1}_{B_1}, {\bf 1}_{B_2})&=P(B_1 \cap B_2)-\frac{1}{2} \cdot \frac{1}{4}
=P(\s(2)<\s(3), \s(2)<\s(1), \s(3)<\s(4))-\frac{1}{8}  \nonumber \\
&= P(\s(2)<\min\{\s(1),\s(3),\s(4)\},  \s(3)<\s(4))-\frac{1}{8} = \frac{1}{4} \cdot \frac{1}{2}-\frac{1}{8}=0, \label{eq:covB1B2}
\end{align}
\begin{align}
\Cov({\bf 1}_{B_1}, {\bf 1}_{B_3}) &=P(B_1 \cap B_3)-\frac{1}{2} \cdot \frac{1}{4}
=P(\s(2)<\s(3), \s(3)<\s(2), \s(4)<\s(5))-\frac{1}{8} \nonumber \\
&=0-\frac{1}{8}=\frac{-1}{8}, \label{eq:covB1B3}
\end{align}
since the event $\{ \s(2)<\s(3), \s(3)<\s(2), \s(4)<\s(5) \}$ can not occur
and
\begin{align}
\Cov({\bf 1}_{B_1}, {\bf 1}_{B_4})&=P(B_1 \cap B_4)-\frac{1}{2} \cdot \frac{1}{4}
=P(\s(2)<\s(3), \s(4)<\s(3), \s(5)<\s(6))-\frac{1}{8}  \nonumber \\
&= P(\max\{\s(2),\s(4)\}<\s(3),  \s(5)<\s(6))-\frac{1}{8} = \frac{1}{3} \cdot \frac{1}{2}-\frac{1}{8}=\frac{1}{24}. \label{eq:covB1B4}
\end{align}
By symmetry, we get
\begin{align}
\Cov({\bf 1}_{B_{n-3}}, {\bf 1}_{B_{n-2}})=0, \Cov({\bf 1}_{B_{n-4}}, {\bf 1}_{B_{n-2}})=-1/8 \text{ and }
\Cov({\bf 1}_{B_{n-5}}, {\bf 1}_{B_{n-2}})=1/24. \label{eq:covBn-2}
\end{align}
When $j=i+1$ and $2\leq i \leq n-4$, we obtain
\begin{align}
\Cov({\bf 1}_{B_{i}}, {\bf 1}_{B_{i+1}})&=\Cov({\bf 1}_{B_{2}}, {\bf 1}_{B_{3}})=P(B_2 \cap B_{3})-\frac{1}{4} \cdot \frac{1}{4} \nonumber \\
&=P(\s(2)<\s(1), \s(3)<\s(4), \s(3)<\s(2), \s(4)<\s(5))-\frac{1}{16}  \nonumber \\
&=P(\s(3) < \min\{\s(1),\s(2), \s(4), \s(5)\}, \s(2)<\s(1), \s(4)<\s(5)) -\frac{1}{16} \nonumber \\
&=\frac{1}{5} \cdot \frac{1}{2} \cdot \frac{1}{2}-\frac{1}{16}=\frac{-1}{80}. \label{eq:covB2B3}
\end{align}
Similarly, if $j=i+2$ and $2\leq i \leq n-5$ we have
\begin{align}
\Cov({\bf 1}_{B_{i}}, {\bf 1}_{B_{i+2}})&=\Cov({\bf 1}_{B_{2}}, {\bf 1}_{B_{4}})=P(B_2 \cap B_{4})-\frac{1}{4} \cdot \frac{1}{4} \nonumber  \\
&=P(\s(2)<\s(1), \s(3)<\s(4), \s(4)<\s(3), \s(5)<\s(6))-\frac{1}{16} \nonumber \\
&=0-\frac{1}{16}=\frac{-1}{16}, \label{eq:covB2B4}
\end{align}
since the event $\{ \s(2)<\s(1), \s(3)<\s(4), \s(4)<\s(3), \s(5)<\s(6) \}$ can not occur.
Finally, for the case $j=i+3$ and $2\leq i \leq n-6$, we get
\begin{align}
\Cov({\bf 1}_{B_{i}}, {\bf 1}_{B_{i+3}})&=\Cov({\bf 1}_{B_{2}}, {\bf 1}_{B_{5}})=P(B_2 \cap B_{5})-\frac{1}{4} \cdot \frac{1}{4} \nonumber  \\
&=P(\s(2)<\s(1), \s(3)<\s(4), \s(5)<\s(4), \s(6)<\s(7))-\frac{1}{16} \nonumber \\
&=P(\max\{\s(3), \s(5)\}<\s(4), \s(2)<\s(1), \s(6)<\s(7)) -\frac{1}{16} \nonumber \\
&=\frac{1}{3} \cdot \frac{1}{2} \cdot \frac{1}{2}-\frac{1}{16}=\frac{1}{48}, \label{eq:covB2B5}
\end{align}
and therefore, for every $n\geq 7$, by combining the equations in \eqref{eq:varBi}-\eqref{eq:covB2B5} we obtain
\begin{align*}
\Var(Q_n)&=\Var \left(\sum_{i=1}^{n-2} {\bf 1}_{B_i} \right)=\sum_{i,j=1}^{n-2} \Cov( {\bf 1}_{B_i}, {\bf 1}_{B_j}) \\
&=2 \cdot \frac{1}{4} +(n-4)\frac{3}{16} +4 \cdot \frac{-1}{8}+ 4 \cdot \frac{1}{24}+ 2(n-5)\frac{-1}{80} + 2(n-6)\frac{-1}{16} + 2(n-7)\frac{1}{48} \\
&= \frac{19n}{240}.
\end{align*}
\end{proof}

\subsection{A Recurrence Relation for the Exact Distribution of $R_n$}
\label{sec:recursive-reln-Rn}
Recall that for a sequence of pairwise distinct real numbers $x_1,\dots ,x_n$, we say $x_k$ is a local minimum if $x_k$ is less than its neighbors (i.e., $x_{k-1}$ and $x_{k+1}$) for $2\leq k \leq n-1$.
Let us also consider $x_1$ (resp. $x_n$) as a local minimum if $x_1<x_2$ (resp. $x_n<x_{n-1}$).
Then, notice that $\sum_{i=1}^{n-1} {\bf 1}_{A_i}$ is exactly the number of local minima in a uniformly random permutation from $P_{n-1}$.
Let $p(n,k)$ denote $P(R_n=k)$ and set $p(1,0)=p(2,1)=1$.
Also, let $m(n,k)$ be the number of permutations in $P_n$ with exactly $k$ local minimums.
Notice that $p(n,k)=m(n-1,k)/(n-1)!$.
Since the term 1 in the sequence is always a local minimum and any two local minimums are not adjacent,
we see that $p(n,0)=0$ for $n\geq 2$ and $p(n,k)=0$ whenever $k>n/2$.

Any permutation in $P_n$ can be uniquely obtained by increasing each element of a permutation in $P_{n-1}$ by one and inserting the element 1 in one of the possible $n$ places.
In this process, inserting the element 1 into the sequence does not effect the number of local minimums if it is placed next to a local minimum, and otherwise, increases the number of local minimums by one.
Therefore, we obtain
\begin{align*}
m(n,k)=2k\cdot m(n-1,k)+(n-2(k-1))\cdot m(n-1,k-1),
\end{align*}
since any two local minimums are not adjacent.
Thus, as $m(n-1,k)=p(n,k)(n-1)!$, we have
\begin{align}
\label{eq:rec}
p(n+1,k)=\frac{2k}{n}  p(n,k)+\frac{n-2k+2}{n} p(n,k-1),
\end{align}
for every $n\geq 2$.
Therefore, the exact pmf of $R_n$ can be computed for any $n\geq 3$ by using the recursion given in \eqref{eq:rec}.

\cite{enns:1999} consider the index of the spacing with the largest length (i.e., index $i$ such that $D_i=\max \{D_1,D_2,\dots ,D_{n-1}\}$) and derive the following recurrence relation
\begin{align*}
p(n,k)= \frac{2}{n-1} p(n-1,k)+ \sum_{i=2}^{n-2} \sum_{j=1}^{k-1}\frac{ p(i,j) p(n-i,k-j)}{n-1},
\end{align*}
for every $n\geq 4$.
Then, using generating functions, they obtain the relation in \eqref{eq:rec}.

\section{Asymptotic Results for $R_n$ and $Q_n$}
\label{sec:asy-res}
In this section,
we prove SLLN results and CLTs for both $R_n$  and $Q_n$ as $n \rightarrow \infty$.
Observe that neither ${\bf 1}_{A_1}, {\bf 1}_{A_2}, \dots , {\bf 1}_{A_{n-1}}$ nor ${\bf 1}_{B_1}, {\bf 1}_{B_2}, \dots , {\bf 1}_{B_{n-2}}$ is an i.i.d. sequence.
However, both have a nice structure which allows SLLN and CLT results to follow.
\begin{definition}
A sequence of random variables $X_1,X_2, \dots , X_n$
is said to be \emph{$m$-dependent} if the random variables $(X_{1},X_2, \dots , X_{i})$ and $(X_{j}, X_{j+1}, \dots , X_{n})$
are independent whenever $j-i>m$.
\end{definition}
Since each $A_i$ (resp. $B_i$) only depends on the ordering of $\s(i-1), \s(i)$ and $\s(i+1)$ (resp. $\s(i-1), \s(i), \s(i+1)$ and $\s(i+2)$), it is clear to see that the sequence ${\bf 1}_{A_1}, {\bf 1}_{A_2}, \dots , {\bf 1}_{A_{n-1}}$
(resp. ${\bf 1}_{B_1}, {\bf 1}_{B_2}, \dots , {\bf 1}_{B_{n-2}}$) is 2-dependent (resp. 3-dependent).
For the asymptotic results,
note that we can ignore the random variables ${\bf 1}_{A_{1}}$ and ${\bf 1}_{A_{n-1}}$ (resp. ${\bf 1}_{B_{1}}$ and ${\bf 1}_{B_{n-2}}$),
since their contribution to the summand $\sum_{i=1}^{n-1} {\bf 1}_{A_{i}}$ (resp. $\sum_{i=1}^{n-2} {\bf 1}_{B_{i}}$) is negligible in the limit as $n$ goes to infinity.
Therefore, to obtain asymptotic results for $R_n$ (resp. $Q_n$) it suffices to consider $\sum_{i=2}^{n-2} {\bf 1}_{A_{i}}$ (resp. $\sum_{i=2}^{n-3} {\bf 1}_{B_{i}}$)
which is the sum of identically distributed  2-dependent (resp. 3-dependent)
indicator random variables with mean $1/3$ (resp. $1/4$).

For $m$-dependent identically distributed sequences, the SLLN extends in a straightforward manner by just partitioning the summand in $m+1$ sums of i.i.d. subsequences, and hence,
we obtain SLLN results for both $R_n$ and $Q_n$.
\begin{thm}\label{thm:slln}
{\rm (SLLN for $U(0,1)$ data)} For a random sample of size $n$ from $U(0,1)$, we have ${R_n}/{n} \xrightarrow{a.s.} {1}/{3}$ and ${Q_n}/{n} \xrightarrow{a.s.} {1}/{4}$ as $n\rightarrow \infty$, where $\xrightarrow{a.s.}$ denotes almost sure convergence.
\end{thm}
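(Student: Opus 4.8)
The plan is to deduce both statements from strong laws for the indicator sums $\sum_{i=1}^{n-1}\mathbf{1}_{A_i}$ and $\sum_{i=1}^{n-2}\mathbf{1}_{B_i}$ appearing in \eqref{eq:equiv}, exploiting the fact that these indicator sequences have only finite-range dependence. First I would fix, once and for all, a probability space carrying an i.i.d.\ sequence $\xi_1,\xi_2,\dots$ of continuous random variables, and on it define $A_i=\{\xi_i<\min\{\xi_{i-1},\xi_{i+1}\}\}$ and $B_i=\{\xi_i<\xi_{i-1},\ \xi_{i+1}<\xi_{i+2}\}$ (with the obvious one-sided definitions at the endpoints). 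Since the ranks of $\xi_1,\dots,\xi_{n-1}$ form a uniformly random permutation of $\{1,\dots,n-1\}$, this construction realizes, for every $n$ simultaneously on one space, the joint laws from Section~\ref{sec:prelim}; it is therefore legitimate to prove the almost sure statements here. The boundary indicators $\mathbf{1}_{A_1},\mathbf{1}_{A_{n-1}},\mathbf{1}_{B_1},\mathbf{1}_{B_{n-2}}$ are bounded by $1$, so after dividing by $n$ they contribute $O(1/n)\to0$; hence it suffices to show $\tfrac1n\sum_{i=2}^{n-2}\mathbf{1}_{A_i}\to\tfrac13$ and $\tfrac1n\sum_{i=2}^{n-3}\mathbf{1}_{B_i}\to\tfrac14$ almost surely.

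Next I would carry out the $m$-dependence argument for $R_n$. The sequence $(\mathbf{1}_{A_i})_{i\ge2}$ is $2$-dependent, since $A_i$ depends only on $\xi_{i-1},\xi_i,\xi_{i+1}$, and each $\mathbf{1}_{A_i}$ is a $\mathrm{Bernoulli}(1/3)$ variable for $i\ge2$. Partition the index set into the three residue classes $S_r=\{i\ge2:\ i\equiv r\pmod 3\}$, $r\in\{0,1,2\}$. Within each $S_r$ consecutive indices differ by $3>2$, so $\{\mathbf{1}_{A_i}\}_{i\in S_r}$ is an i.i.d.\ $\mathrm{Bernoulli}(1/3)$ sequence, to which the classical SLLN applies: the average of its first $N$ terms tends to $1/3$ almost surely. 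Since $|S_r\cap[2,n-2]|=n/3+O(1)$ for each $r$, writing $\tfrac1n\sum_{i=2}^{n-2}\mathbf{1}_{A_i}$ as a weighted sum over $r$ of these three subsequence averages and letting $n\to\infty$ gives $\tfrac1n\sum_{i=2}^{n-2}\mathbf{1}_{A_i}\to 3\cdot\tfrac13\cdot\tfrac13=\tfrac13$ almost surely, hence $R_n/n\to1/3$ almost surely.

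The argument for $Q_n$ has exactly the same shape: $(\mathbf{1}_{B_i})_{i\ge2}$ is $3$-dependent and each $\mathbf{1}_{B_i}$ is $\mathrm{Bernoulli}(1/4)$ for $i\ge2$, so I would partition $\{i\ge2\}$ into the four residue classes modulo $4$, apply the SLLN to each of the resulting i.i.d.\ subsequences, and recombine using $|S_r\cap[2,n-3]|=n/4+O(1)$ to obtain $Q_n/n\to1/4$ almost surely. I do not expect any genuine analytic difficulty; the only points needing care are bookkeeping: fixing the common coupling so that ``almost sure'' is meaningful across all $n$, checking that each residue-class count divided by $n$ tends to $1/(m+1)$, and verifying that the finitely many boundary and leftover indicators are asymptotically negligible. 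The reduction to i.i.d.\ subsequences via $m$-dependence is essentially the entire content of the proof.
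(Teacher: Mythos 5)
Your proposal is correct and follows essentially the same route as the paper: discard the $O(1)$ boundary indicators, use the $2$-dependence (resp.\ $3$-dependence) of $({\bf 1}_{A_i})$ (resp.\ $({\bf 1}_{B_i})$) to split the sum into $m+1$ i.i.d.\ Bernoulli subsequences along residue classes, apply the classical SLLN to each, and recombine. The only loose point is your claim that the rank coupling of an i.i.d.\ sequence $\xi_1,\xi_2,\dots$ reproduces ``the joint laws'' across all $n$ --- Section~\ref{sec:prelim} only gives the per-$n$ distributional identity \eqref{eq:equiv}, so strictly one should either argue via complete convergence (which depends only on marginals) or, as the paper implicitly does, state the almost sure limit for the permutation representation itself.
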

The asymptotic normality of the random variables we consider is due to the well-known results on the sequence of $m$-dependent identically distributed and bounded random variables (e.g., see \cite{hoeffding:1948}, \cite{chung:1974}).
\begin{thm}{\rm (CLT for $U(0,1)$ data)}\label{thm:cltrnqn}
For a random sample of size $n$ from $U(0,1)$, we have
$$
\frac{R_n-n/3}{\sqrt{2n/45}} \xrightarrow{\mathcal{L}} \mathcal{N}(0,1) \text{ and }
\frac{Q_n-n/4}{\sqrt{19n/240}} \xrightarrow{\mathcal{L}} \mathcal{N}(0,1),$$
as $n\rightarrow \infty$,
where $\xrightarrow{\mathcal{L}}$ denotes the convergence in law and $\mathcal{N}(0,1)$ is the standard normal distribution.
\end{thm}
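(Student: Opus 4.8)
\noindent\emph{Proof proposal.}
The plan is to deduce both limits from the classical central limit theorem for stationary, uniformly bounded, $m$-dependent sequences (see \cite{hoeffding:1948} and \cite{chung:1974}), applied to the interior partial sums $S^A_n:=\sum_{i=2}^{n-2}{\bf 1}_{A_i}$ and $S^B_n:=\sum_{i=2}^{n-3}{\bf 1}_{B_i}$, and then to transfer the conclusion back to $R_n$ and $Q_n$ via \eqref{eq:equiv} and Slutsky's theorem.

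First I would dispose of the boundary indicators. By \eqref{eq:equiv} it is enough to treat $\sum_{i=1}^{n-1}{\bf 1}_{A_i}$; since $0\le \sum_{i=1}^{n-1}{\bf 1}_{A_i}-S^A_n\le 2$ and the corresponding means differ by a bounded constant, $(R_n-n/3)/\sqrt{2n/45}$ and $(S^A_n-\E S^A_n)/\sqrt{2n/45}$ differ by a quantity converging to $0$ in probability, so by Slutsky's theorem it suffices to prove a CLT for $S^A_n$ normalized by $\sqrt{2n/45}$, and similarly for $S^B_n$ normalized by $\sqrt{19n/240}$. Next I would make the dependence structure literal: realizing the uniform spacings through an i.i.d.\ $U(0,1)$ sequence $V_1,V_2,\dots$ and invoking their interchangeability, $({\bf 1}_{A_2},\dots,{\bf 1}_{A_{n-2}})$ has the same joint law as $(Y_1,\dots,Y_{n-3})$ with $Y_k:={\bf 1}\{V_{k+1}<\min\{V_k,V_{k+2}\}\}$, which is a stationary $2$-dependent $\{0,1\}$-valued sequence; likewise $({\bf 1}_{B_2},\dots,{\bf 1}_{B_{n-3}})$ is a prefix of a stationary $3$-dependent $\{0,1\}$-valued sequence. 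Thus $S^A_n$ and $S^B_n$ are genuine partial sums of fixed stationary bounded $m$-dependent sequences ($m=2$ and $m=3$), and the $m$-dependent CLT applies as soon as the limiting variances are strictly positive.

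Those limiting variances are precisely the quantities already assembled in Section \ref{sec:mean-var}. For $S^A_n/\sqrt n$ the limit is $\Var({\bf 1}_{A_2})+2\Cov({\bf 1}_{A_2},{\bf 1}_{A_3})+2\Cov({\bf 1}_{A_2},{\bf 1}_{A_4})=\tfrac29-\tfrac29+\tfrac2{45}=\tfrac2{45}>0$ from the interior entries of the covariance matrix in the proof of Theorem \ref{thm:meanvarrn}, and for $S^B_n/\sqrt n$ it is $\Var({\bf 1}_{B_2})+2\Cov({\bf 1}_{B_2},{\bf 1}_{B_3})+2\Cov({\bf 1}_{B_2},{\bf 1}_{B_4})+2\Cov({\bf 1}_{B_2},{\bf 1}_{B_5})=\tfrac3{16}-\tfrac1{40}-\tfrac18+\tfrac1{24}=\tfrac{19}{240}>0$ from the proof of Theorem \ref{thm:meanvarqn}. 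Because $\Var(R_n)=2n/45$ and $\Var(Q_n)=19n/240$ differ from $\Var(S^A_n)$ and $\Var(S^B_n)$ by $O(1)$, the normalizations in the statement are the correct ones, so the $m$-dependent CLT makes $S^A_n$ and $S^B_n$ asymptotically standard normal after centering and scaling; combining with the first step proves the theorem.

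The one genuinely delicate point is the reduction in the second step: a priori ${\bf 1}_{A_1},\dots,{\bf 1}_{A_{n-1}}$ form a triangular array whose law changes with $n$, so the classical (non-triangular) $m$-dependent CLT does not apply off the shelf; interchangeability of the uniform spacings is exactly what lets us identify the interior of this array with a prefix of a single fixed infinite stationary $m$-dependent sequence. Should one wish to avoid this identification, the alternative is a direct Bernstein big-block/small-block argument: partition $\{2,\dots,n-2\}$ into long blocks separated by gaps of length $m$, so that the block sums are exactly independent, control the (negligible) small-block contribution by its variance, and apply the Lindeberg CLT to the big blocks; but invoking the established $m$-dependent CLT is the shorter route. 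Everything else --- the boundary truncation and the variance bookkeeping --- is routine given Theorems \ref{thm:meanvarrn} and \ref{thm:meanvarqn}.
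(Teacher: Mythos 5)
Your proposal is correct and follows essentially the same route as the paper: drop the negligible boundary indicators, view the interior sums $\sum_{i=2}^{n-2}{\bf 1}_{A_i}$ and $\sum_{i=2}^{n-3}{\bf 1}_{B_i}$ as sums of identically distributed $2$-dependent (resp.\ $3$-dependent) bounded indicators, and invoke the classical $m$-dependent CLT of \cite{hoeffding:1948} and \cite{chung:1974}. The additional details you supply --- the Slutsky reduction, the identification of the interior indicators with a prefix of a single fixed stationary $m$-dependent sequence (resolving the triangular-array issue), and the verification that the limiting variances $2/45$ and $19/240$ are the ones appearing in the normalization --- are exactly the points the paper leaves implicit, and your computations of them are correct.
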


\begin{remark}
\label{rem:u-stat}
{\bf Is $R_n$ or $Q_n$ a U-statistic?}
At first glance (a scaled form of) $R_n$ and $Q_n$ might look like a $U$-statistic of degree 2
with symmetric kernels as we can write them as
\begin{align}\label{equstat}
\frac{R_n}{ {n \choose 2}}=\frac{1}{ {n \choose 2}} \sum_{1\leq i< j \leq n} {\bf 1}_{A(i,j)}  \text{ and }
\frac{Q_n}{ {n \choose 2}}=\frac{1}{ {n \choose 2}} \sum_{1\leq i< j \leq n} {\bf 1}_{B(i,j)}
\end{align}
where $A(i,j)$ is the event that $\{X_i,X_j\}$ is a reflexive pair and
$B(i,j)$ is the event that $X_i$ and $X_j$ is share a NN.
If $R_n$ and $Q_n$ were $U$-statistics,
then asymptotic normality of both would follow by the general CLT for $U$-statistics (\cite{hoeffdingUstat:1948}).
However, the kernels (${\bf 1}_{A(i,j)}$ and ${\bf 1}_{B(i,j)}$) do not only depend on $X_i$ and $X_j$, but to all data points.
Hence the kernels are not of degree 2 but of degree $n$.
Although,
for $U$-statistics,
the degree can be equal to the sample size,
it should be a fixed quantity, $m$.
So $m \le n$ allows $m=n$, but this would be for small samples,
and as $n$ increases,
$m$ should stay fixed which is not the case here.
So, neither $R_n$ nor $Q_n$ is a $U$-statistic of finite (fixed) degree,
hence this approach would not work in proving the CLT for $R_n$ and $Q_n$.
\end{remark}

\begin{remark}\label{rem:rd}
{\bf Asymptotic Behavior of $R_n$ and $Q_n$ in Higher Dimensions.}
The results in \cite{henze:1987} and \cite{schiling:1986} imply that $\E(R_n)/n\rightarrow r(d)$ and $\E(Q_n)/n\rightarrow q(d)$  as $n\rightarrow \infty$,
where $r(d)$ and $q(d)$ are constants which only depend on the dimension $d$,
whenever the underlying distribution has an a.e. continuous density in $\mathbb{R}^d$
(i.e., $r(d)$  and $q(d)$ (somewhat unexpectedly) do not depend on the continuous distribution).
We have $r(1)=1/3$, $r(2)=3\pi /(8\pi +3\sqrt{3})\approx 0.3108$, $r(3)=8/27$, and in general,
\[
 r(d) =
  \begin{cases}
   \displaystyle \left[ 3+\sum_{k=1}^m \frac{1\cdot 3  \cdots   (2k-1)}{2\cdot 4 \cdots (2k)} \left(\frac{3}{4} \right)^k \right ]^{-1} & \text{if } d=2m+1, \\
   \displaystyle \left[ \frac{8}{3}+\frac{\sqrt{3}}{\pi} \left (1+\sum_{k=1}^{m-1} \frac{2\cdot 4  \cdots   (2k)}{3\cdot 5 \cdots  (2k+1)} \left(\frac{3}{4} \right)^k \right) \right ]^{-1} & \text{if } d=2m,
  \end{cases}
\]
(see, e.g., \cite{pickard:1982}).
On the other hand, the exact value of $q(d)$ is known only for $d=1$, $q(1)=0.25$.
For $d>1$, we only have empirical approximations, for example,
$q(2)\approx 0.315 $, $q(3) \approx 0.355$, $q(4)\approx 0.38$ and $q(5)\approx 0.4$
(\cite{schiling:1986}).
\end{remark}

\begin{remark}
Some other quantities based on NN digraph are of interest in the literature.
Notice that even though each point has a unique NN, it is not necessarily the NN of precisely one point.
Let $Q_{j,n}$ be the number of points in the data which are NN of exactly $j$ other points.
The quantities $Q_{j,n}$'s are used in tests for spatial symmetry (see, \cite{ceyhan:2014}).
Also in \cite{enns:1999}, $Q_{0,n}, Q_{1,n}$ and $Q_{2,n}$ correspond to the number of \emph{lonely}, \emph{normal} and \emph{popular} individuals in a population of size $n$, respectively.
Moreover, the fraction of points serving as NN to precisely $j$ other points (i.e., $Q_{j,n}/n$) is studied by many authors
(e.g., see \cite{clark:1955}, \cite{henze:1987} and \cite{newmanRT:1983}).
Clearly, in one dimension, a point is NN to at most two other points and hence $Q_{j,n}=0$ for every $j\geq 3$.
Double counting arguments for the number of vertices and the number of arcs give
$n=Q_{0,n}+Q_{1,n}+Q_{2,n}$ and $n=0\cdot Q_{0,n}+1\cdot Q_{1,n}+2\cdot Q_{2,n}$, respectively.
On the other hand, one can easily see $Q_n=\sum_{j\geq 0} {j \choose 2}Q_{j,n}=Q_{2,n}$ and obtain
$Q_n=Q_{0,n}=Q_{2,n}=(n-Q_{1,n})/2$.
Thus, for each $j=0,1,2$, we have SLLN and CLT results for $Q_{j,n}$ together with the exact values of its mean and variance
using the results on $Q_n$.
\end{remark}

\section{Discussion and Conclusions}
\label{sec:disc-conc}
In this article,
we study the probabilistic behavior of the number of reflexive nearest neighbors (NNs), denoted $R_n$,
and the number of shared NNs, denoted $Q_n$,
for one dimensional uniform data.
$R_n$ and $Q_n$ can also be viewed as graph invariants for the NN digraph with vertices being the data points,
and arcs being inserted from a point to its NN.
In particular,
we provide the means and variances of both $R_n$ and $Q_n$, and derive SLLN and CLT results for both of the quantities under the same settings.
We also present a recursive relation for the probability mass function (pmf)
of $R_n$,
which can provide the exact distribution of $R_n$ (by computation for finite $n$).
Recall that the results we obtain for $R_n$ (the mean, the variance and the recurrence relation)
are in agrement with the ones in \cite{enns:1999}.
However, our derivation of the results are different from theirs and their method is not applicable for $Q_n$.

This work lays the foundation for the study of (number) reflexive NN pairs, shared NN pairs
related invariants of NN digraphs in higher dimensions
which would be more challenging due to the lack of ordering of the data points in multiple dimensions.
Another potential research direction is that
the results can also be extended to data from non-uniform distributions in one or multiple dimensions.

Our Monte Carlo simulations suggest that CLT results for both $R_n$ and $Q_n$ seem to hold and
$\Var(R_n)/n$ and $\Var(Q_n)/n$ converge to $\sigma_R^2(d)$ and $\sigma_Q^2(d)$, respectively,
whenever the underlying process is a distribution on $\mathbb{R}^d$ with an a.e. continuous density,
where $\sigma_R^2(d)$ and $\sigma_Q^2(d)$ are constants which only depend on the dimension $d$.
Expectations are handled in \cite{henze:1987} and \cite{schiling:1986}, see Remark \ref{rem:rd}.
Notice that even in case of $d=1$,
we can not apply the method used in the paper when the distribution of the sample is not uniform since
we lose the interchangeability of the spacings.

The number of reflexive NN pairs was also used in inferential statistics in literature.
For example, \cite{dacey:1960} used it to test the clustering of river towns in US.
However, \cite{dacey:1960} ignored the uncertainty due to the randomness in $R_n$
and compared the observed $R_n$ values to its expected value
to declare clustering or regularity of the towns.
This methodology was also criticized by \cite{pinder:1975}
who proposed an alternative method based on the average NN distance (and its empirical pdf) for the same type of inference.
But,
$R_n$ can be employed in exact inference using its exact pmf for testing such one-dimensional patterns
for small $n$ (as the exact distribution depends on the distribution of the data).
However, by the above discussion on $r(d)$ and $\sigma_R^2(d)$,
for data from any continuous distribution,
$R_n$ would converge to the same normal distribution as $n$ goes to infinity.
Hence, testing spatial clustering/regularity based on the asymptotic approximation of $R_n$ is not appropriate
(hence not recommended),
as it would have power equal to the significance level of the test in the limit
under any continuous alternative as well as under the null pattern (i.e., under uniformity of the points).
On the other hand,
if the convergence in probability of $R_n/n$ and $Q_n/n$ to some constants (regardless of the distribution of continuous data)
is established in all dimensions,
then this would be a desirable property for removing the restrictions of NN tests
which are conditional on $R_n$ and $Q_n$, (e.g., tests of \cite{dixon:1994})
in their asymptotic distribution.
The types of convergence for $R_n/n$ and $Q_n/n$ for data in higher dimensions are topics of ongoing research.

\section*{Acknowledgments}
%We would like to thank an anonymous associate editor and two referees,
%whose constructive comments and suggestions greatly improved the presentation
%and flow of the paper.
%Most of the Monte Carlo simulations presented in this article
%were executed at Ko\c{c} University High Performance Computing Laboratory.
EC was supported by the European Commission under the Marie Curie International Outgoing Fellowship Programme
via Project \# 329370 titled PRinHDD.

\end{document}